 \newtheorem{thm}{Theorem}[section]
 \newtheorem{cor}[thm]{Corollary}
 \newtheorem{lem}[thm]{Lemma}
 \newtheorem{prop}[thm]{Proposition}
 \theoremstyle{definition}
 \newtheorem{dfnt}[thm]{Definition}
 \theoremstyle{remark}
 \newtheorem{rem}[thm]{Remark}
 \theoremstyle{definition}
 \newtheorem{ex}[thm]{Example}
 \newcommand{\CC}{\mathbb{C}}
 \newcommand{\NN}{\mathbb{N}}
\def\move-in{\parshape=1.75true in 5true in}
\title[Monomials as sums of $k^{th}$-powers of  forms]{Monomials as sums of $k^{th}$-powers of forms}
\author[E. Carlini]{Enrico Carlini}
\address[E. Carlini]{DISMA- Department of Mathematical Sciences, Politecnico di Torino, Turin, Italy}
\email{enrico.carlini@polito.it}
\author[A. Oneto]{Alessandro Oneto}
\address[A. Oneto]{Department of Mathematics, Stockholm University, SE-106 91, Stockholm,  Sweden}
\email{oneto@math.su.se}
\begin{document}

\maketitle

\begin{abstract}
Motivated by recent results on the Waring problem for polynomial
rings \cite{FOS} and representation of monomial as sum of powers
of linear forms \cite{CCG}, we consider the problem of presenting
monomials of degree $kd$ as sums of $k^{th}$-powers of forms of
degree $d$. We produce a general bound on the number of summands
for any number of variables which we refine in the two variables
case. We completely solve the $k=3$ case for monomials in two and three variables.
\end{abstract}

\section{Introduction}
Let $S:=\bigoplus_{i\in\NN} S_i=\CC[x_0,\ldots,x_n]$ be the ring
of polynomials in $n+1$ variables with complex coefficients and
with the standard gradation. Given a homogeneous polynomial, or
form, $F\in S_k$ of degree $k\geq 2$, we can ask what is the
minimal number of linear forms needed to write $F$ as sum of their
$k^{th}$-power. The problems concerning this \textit{additive
decomposition of forms} are called \textit{Waring problems for
polynomials} and such minimal number is usually called
\textit{Waring rank}, or simply rank, of $F$.

In the last decades,  this kind of problems attracted a great deal
of work. In 1995, J.Alexander and A.Hirschowitz determined the
rank of the generic form \cite{AH}. However, given an explicit
form $F$, to compute the Waring rank of $F$ is more difficult and
we know the answer only in a few cases. One of these cases is the
monomial case.

In \cite{CCG}, E. Carlini, M.V. Catalisano and A.V. Geramita gave
an explicit formula to compute the Waring rank of a given monomial
in any number of variables and any degree.

In \cite{FOS}, R. Fr\"oberg, G. Ottaviani and B. Shapiro
considered a more general Waring problem. Given a form
$F\in\CC[x_0,\ldots,x_n]$ of degree $kd$, one can ask what is the
minimal number of forms of degree $d$ needed to write $F$ as sum
of their $k^{th}$-powers.

\begin{dfnt}
Let $F\in\CC[x_0,\ldots,x_n]$ be a form of degree $kd$ with $k\geq
2$, we set $$\#_k(F):=\min\{s~|~F=g_1^k+\ldots+g_s^k\}$$ where
$g_i$'s are forms of degree $d$. We call $\#_k(F)$ the
$k^{th}$\textit{-Waring rank of F}, or simply the $k^{th}$-rank of
$F$.
\end{dfnt}

Clearly, the $d=1$ case is the ``standard'' Waring problem. In
\cite{FOS}, the authors considered the $d\geq 2$ cases and they
proved that \textit{any} generic form of degree $kd$ can be
written as sum of $k^n$ $k^{th}$-powers.

Motivated by these recent results about the Waring rank of
monomials and about the mentioned generalization of the Waring
problem, we started to investigate the $k^{th}$-Waring rank for
monomials of degree $kd$.

In Section \ref{results}, we prove Theorem \ref{UpperBoundTHM}
stating that the $k^{th}$-rank of a monomial of degree $kd$ is
less or equal than $2^{k-1}$, for any $d$ and any number of
variables. Then we focus on some special cases: the binary case in
Section \ref{binarysection}, and the case with three or more
variables in Section \ref{threeormoresection}. In the binary
($n=1$) case we produce a general bound on $\#_k(M)$. While, for
$n\geq 2$, we give a complete description of the $k=3$ case.

The authors wish to thank R.Froeberg and B.Shapiro for their many
helpful suggestions and wonderful ideas. The first author was a
guest of the Department of Mathematics of the University of
Stockholm when this work was started. The first author received
financial support also from the KTH and the Fondo Giovani
Ricercatori of the Politecnico di Torino.

\section{Basic Facts}
First we recall the result about the Waring rank of monomials mentioned above.

\begin{thm}\cite{CCG}\label{CCGTheorem}
 Given a monomial $M=x_0^{a_0}\ldots x_n^{a_n}$ of degree $k$ such that $1\leq a_0\leq\ldots\leq a_n$, then
 \begin{equation}
  \#_k(M)={1 \over a_0+1}\prod_{i=0}^n (a_i+1).
 \end{equation}
\end{thm}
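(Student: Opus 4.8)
The natural setting is Macaulay's theory of inverse systems (apolarity). Let $T:=\CC[y_0,\ldots,y_n]$ act on $S$ by differentiation, and for $F\in S_k$ write $F^\perp:=\{\partial\in T:\partial\circ F=0\}$ for its apolar ideal. Since $\deg M=k$ we are in the classical ($d=1$) Waring situation, and the Apolarity Lemma identifies $\#_k(M)$ with the least cardinality of a reduced finite set of points $X\subseteq\PP^n$ such that $I_X\subseteq M^\perp$. So the first step is to compute $M^\perp$: because distinct monomial differential operators send $M=x_0^{a_0}\cdots x_n^{a_n}$ to scalar multiples of distinct monomials (or to $0$), $M^\perp$ is the monomial ideal $(y_0^{a_0+1},y_1^{a_1+1},\ldots,y_n^{a_n+1})$, an Artinian complete intersection. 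It is also convenient to record that the claimed value simplifies to $N:=\prod_{i=1}^n(a_i+1)$, i.e. the smallest exponent $a_0$ drops out.

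For the upper bound $\#_k(M)\le N$, the plan is to exhibit one explicit reduced apolar scheme of degree $N$. Take $J:=(y_1^{a_1+1}-y_0^{a_1+1},\ \ldots,\ y_n^{a_n+1}-y_0^{a_n+1})$. Each generator is annihilated by $M$, since $a_0\le a_i$ forces both $y_i^{a_i+1}\circ M=0$ and $y_0^{a_i+1}\circ M=0$; hence $J\subseteq M^\perp$. As a complete intersection of $n$ forms in $\PP^n$, $J$ cuts out a scheme of degree $\prod_{i=1}^n(a_i+1)=N$, and one checks directly that its support is the $N$ distinct points $[1:\zeta_1:\cdots:\zeta_n]$ with $\zeta_i^{a_i+1}=1$, each of which is reduced by the Jacobian criterion (at such a point $y_0\neq 0$ and the Jacobian of the generators is diagonal with entries $(a_i+1)\zeta_i^{a_i}\neq 0$). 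The Apolarity Lemma then gives $\#_k(M)\le N$.

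The lower bound $\#_k(M)\ge N$ is the crux of the theorem. The catalecticant (Hilbert function) bound is hopelessly weak here: when several exponents are comparable the complete intersection $T/M^\perp$ has maximal Hilbert value far below $N$ (e.g. $x^2y^2z^2$ gives max value $7<9$), so one really must understand which reduced schemes can be apolar to $M$. I would argue by induction on the number of variables. In the binary case $n=1$, every form of degree $\le a_1$ in $M^\perp=(y_0^{a_0+1},y_1^{a_1+1})$ is divisible by $y_0^{a_0+1}$ (so has a repeated root), while $y_0^{a_1+1}+y_1^{a_1+1}=y_0^{a_0+1}\cdot y_0^{a_1-a_0}+y_1^{a_1+1}\in M^\perp$ is squarefree; by Sylvester's theory of binary forms the rank is the minimal degree of a squarefree apolar form, namely $a_1+1=N$. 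For the inductive step one slices with a general hyperplane: restricting $X$ and $M^\perp$ suitably should relate an apolar scheme for $M$ to an apolar scheme for the monomial $x_1^{a_1}\cdots x_n^{a_n}$ in one fewer variable and isolate a factor $a_1+1$, yielding $\#_k(M)\ge (a_1+1)\cdot\#_{a_1+\cdots+a_n}\!\big(x_1^{a_1}\cdots x_n^{a_n}\big)=(a_1+1)\prod_{i=2}^n(a_i+1)=N$. The main obstacle is making this slicing precise — controlling how the points of $X$ distribute under the restriction and verifying that the restricted ideal still lands inside the apolar ideal of the smaller monomial — and this is where essentially all of the work lies.
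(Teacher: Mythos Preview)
This theorem is not proved in the present paper; it is quoted from \cite{CCG}, so any comparison is with the Carlini--Catalisano--Geramita argument there. Your upper bound is exactly their construction and is correct.

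The lower bound, however, has a genuine gap, and the route you sketch is not the one in \cite{CCG}. You propose induction on $n$, ``slicing with a general hyperplane'' to peel off a factor $a_1+1$ and reduce to an apolar scheme for $x_1^{a_1}\cdots x_n^{a_n}$. But a general hyperplane section of a finite set $X\subset\PP^n$ is empty, and neither slicing nor projecting $X$ produces an apolar scheme for the smaller monomial together with the required multiplicative factor; there is no visible mechanism that converts $|X|$ into $(a_1+1)$ times a lower\nobreakdash-dimensional rank. You correctly flag this as ``where essentially all of the work lies'', but the missing idea is not a refinement of this induction --- it is a different argument altogether. In \cite{CCG} the lower bound is a one\nobreakdash-step length comparison: from $I_X\subseteq M^\perp=(y_0^{a_0+1},\ldots,y_n^{a_n+1})$ one reduces modulo the variable $y_0$ of \emph{smallest} exponent to obtain $\overline{I_X}\subseteq(y_1^{a_1+1},\ldots,y_n^{a_n+1})$ in $\CC[y_1,\ldots,y_n]$, whence
\[
|X|\ \ge\ \dim_\CC \CC[y_1,\ldots,y_n]\big/\overline{I_X}\ \ge\ \dim_\CC \CC[y_1,\ldots,y_n]\big/(y_1^{a_1+1},\ldots,y_n^{a_n+1})\ =\ N,
\]
the first inequality because $y_0$ acts as a nonzerodivisor on $T/I_X$ (one checks that a minimal apolar $X$ has no points on $\{y_0=0\}$), so the Artinian reduction has length $|X|$. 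No recursion through ranks of smaller monomials is needed, and this direct Hilbert\nobreakdash-function bound via reduction modulo $y_0$ is precisely the step your outline is missing.
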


We now introduce some elementary tools to study the $k^{th}$-rank of monomials.
\begin{rem}\label{GroupingREM}
Consider a monomial $M$ of degree $kd$ in the variables
$\{x_0,\ldots,x_n\}$. We say that a monomial $M'$ of degree $kd'$
in the variables $\{X_0,\ldots,X_m\}$ is a \textit{grouping} of
$M$ if there exists a positive integer $l$ such that $d=ld'$ and $M$ can be obtained
from $M'$ by substituting each variable $X_i$ with a monomial of
degree $l$ in the $x$'s, i.e. $X_i=N_i(x_0,\ldots,x_n)$ for each
$i=1,\ldots,m$ with $\deg(N_i)=l$. The relation between the
$k^{th}$-rank of $M$ and $M'$ is given by $$\#_k(M')\geq\#_k(M).$$
Indeed, given a decomposition of $M'$ as sum of $k^{th}$-powers,
i.e. $$M'=\sum_{i=1}^r F_i(X_0,\ldots,X_n)^k,\text{  with
}\deg(F_i)=d',$$ we can write a decomposition for $M$ by using the
substitution given above, i.e. $$M=\sum_{i=1}^r
F_i(N_0(x_0,\ldots,x_n),\ldots,N_m(x_0,\ldots,x_n))^k.$$
\end{rem}

\begin{rem}\label{SpecializationREM}
Consider a monomial $M$ of degree $kd$ in the variables
$\{x_0,\ldots,x_n\}$. We say that a monomial $M'$ of the same
degree is a \textit{specialization} of $M$ if $M'$ can be found
from $M$ after a certain number of identifications of the type
$x_i=x_j$. Again, it makes sense to compare the two $k^{th}$-ranks
and we get $$\#_k(M)\geq\#_k(M').$$ Indeed, given a decomposition
of $M$ as sum of $r$ $k^{th}$-powers, we can write a decomposition
for $M'$ with the same number of summands applying the
identifications between variables to each addend.
\end{rem}

\begin{rem}\label{ProductREM}
Consider a monomial $M$ of degree $kd_1$ and $N$ a monomial of
degree $d_2$. We can look at the monomial $M'=MN^k$. Clearly the
degree of $M'$ is also divisible by $k$; again, it makes sense to
compare the $k^{th}$-rank of $M$ and $M'$. The relation is
$$\#_k(M)\geq\#_k(M').$$ Indeed, given a decomposition as sum of
$k^{th}$-powers for $M$, e.g. $M=\sum_{i=1}^r F_i^k$ with $F_i$'s
forms of degree $d_1$, we can easily find a decomposition for $M'$
with the same number of summands, i.e. $M'=MN^k=\sum_{i=1}^r
(F_iN)^k$.
\end{rem}

The inequality on the $k^{th}$-rank in Remark \ref{ProductREM} can be strict in
general as we can see in the following example.

\begin{ex}
Consider $k=3$ and the monomials $M=x_1x_2x_3$ and
$M'=(x_0^2)^3M=x_0^6x_1x_2x_3$. By Theorem \ref{CCGTheorem}, we
know that $\#_3(M)=\#_3(x_1x_2x_3)=4$, but we can consider a
\textit{grouping} of the monomial $M'$, i.e.
$M'=(x_0^3)^2(x_1x_2x_3)=X_0^2X_1$. By Remark \ref{GroupingREM} and
Theorem \ref{CCGTheorem}, we have $\#_3(M')\leq\#_3(X_0^2X_1)=3$.
\end{ex}

As a straightforward application of these remarks we get the
following lemma which is useful to reduce the number of cases to
consider once $k$ and $n$ are fixed.

\begin{lem}\label{ModuloLEMMA}
Given a monomial $M=x_0^{a_0}x_1^{a_1}\cdots x_n^{a_n}$ of degree
$kd$, then $$\#_k(M)\leq\#_k([M]),$$ where
$[M]:=x_0^{[a_0]_k}x_1^{[a_1]_k}\cdots x_n^{[a_n]_k}$, where the
$[a_i]_k$'s are the remainders of the $a_i$'s modulo $k$.
\end{lem}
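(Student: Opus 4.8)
The plan is to realize $M$ as obtained from $[M]$ by the operation of Remark \ref{ProductREM}, so that the stated inequality is an immediate consequence. First I would write $a_i = k q_i + [a_i]_k$ for each $i$, where $q_i = \lfloor a_i/k \rfloor \ge 0$ is the quotient and $[a_i]_k \in \{0,1,\ldots,k-1\}$ the remainder. Then
\[
M = x_0^{a_0}x_1^{a_1}\cdots x_n^{a_n} = \left(x_0^{q_0}x_1^{q_1}\cdots x_n^{q_n}\right)^k \cdot x_0^{[a_0]_k}x_1^{[a_1]_k}\cdots x_n^{[a_n]_k} = N^k \cdot [M],
\]
where $N := x_0^{q_0}x_1^{q_1}\cdots x_n^{q_n}$.

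Next I would check that this fits the hypotheses of Remark \ref{ProductREM}, with the roles there played by taking $[M]$ as the monomial ``$M$'' (of degree divisible by $k$, say $kd'$ where $d' = \tfrac1k\sum_i [a_i]_k$) and $N$ as the monomial ``$N$''; the product is then $M = [M]\cdot N^k$, which is exactly the monomial ``$M'$'' of Remark \ref{ProductREM}. One small point to verify is that $\deg M = kd$ is indeed divisible by $k$ and consistent: $\deg M = \sum_i a_i = k\sum_i q_i + \sum_i [a_i]_k$, and since $\deg[M] = \sum_i [a_i]_k$ must itself be a multiple of $k$ (because $kd - k\sum_i q_i$ is), the decomposition is well-posed. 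Applying Remark \ref{ProductREM} directly gives $\#_k([M]) \ge \#_k([M]\cdot N^k) = \#_k(M)$, which is the claim.

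There is really no substantial obstacle here: the lemma is a formal consequence of Remark \ref{ProductREM} once one observes the factorization $M = N^k\,[M]$. The only thing requiring a line of care is confirming that $\deg [M]$ is a multiple of $k$ so that $[M]$ is itself a legitimate input for the $k^{th}$-rank (equivalently, so that $\#_k([M])$ is defined); this follows because $\sum_i a_i \equiv \sum_i [a_i]_k \pmod k$ and $\sum_i a_i = kd$. I would state this explicitly and then conclude by citing Remark \ref{ProductREM}.
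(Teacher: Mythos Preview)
Your proof is correct and is essentially identical to the paper's own argument: write $a_i = k\alpha_i + [a_i]_k$, factor $M = N^k[M]$ with $N = x_0^{\alpha_0}\cdots x_n^{\alpha_n}$, note that $k \mid \deg([M])$, and apply Remark~\ref{ProductREM}. The only difference is that you spell out the divisibility check in slightly more detail than the paper does.
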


\begin{proof}
We can write $a_i=k\alpha_i+[a_i]_k$ for each $i=0,\ldots,n$.
Hence, we get that $M=N^k[M]$, where
$N=x_0^{\alpha_0}x_1^{\alpha_1}\cdots x_n^{\alpha_n}$. Obviously,
$k|\deg([M])$ and by Remark \ref{ProductREM}, we are done.
\end{proof}

\begin{rem}\label{NumericalREM}
With the above notations and numerical assumptions, we have that
$[a_0]_k+\dots+[a_n]_k$ is a multiple of $k$ and also it has to be
at most $ (k-1)(n+1)=kn-n+k-1$. Hence, fixed the number of
variables $n+1$ and the integer $k$, we will have to consider only
a few cases with respect to the remainders of the exponents modulo
$k$.
\end{rem}

\section{Results on the $k^{\mathrm{th}}$-rank for monomials}

In this section we collect our results on the $k$-th rank of
monomials.

\subsection{The general case}\label{results}

Here we present some general results on the $k$-th Waring rank for
monomials.

\begin{rem}\label{k2REM}
Using the idea of grouping variables, we can easily get a complete
description of the $k=2$ case. Given a monomial $M$ of degree $2d$
which is not a square, we have $\#_2(M)=2$. Indeed,
$$M=XY=\left[\frac{1}{2}(X+Y)\right]^2+\left[\frac{i}{2}(X-Y)\right]^2,$$
where $X$ and $Y$ are two monomials of degree $d$.
\end{rem}

In the next result, we see that case $k=2$ is the unique in which
the $k^{th}$-rank of a monomial can be equal to two.

\begin{thm}\label{UpperBoundTHM}
If $M$ is a monomial of degree $kd$, then $\#_k(M)\leq 2^{k-1}$.
Moreover, $\#_k(M)=2$ if and only if $k=2$ and $M$ is not a
square.
\end{thm}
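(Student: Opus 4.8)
The bound $\#_k(M)\le 2^{k-1}$ should follow by induction on $k$, combined with the reduction tools from Section~2 (Lemma~\ref{ModuloLEMMA} and Remark~\ref{ProductREM}). First I would reduce to the case where every exponent of $M$ is strictly less than $k$: by Lemma~\ref{ModuloLEMMA} we may replace $M$ by $[M]$, and if $[M]=1$ (i.e.\ $M$ is a $k^{th}$-power) the rank is $1$. Otherwise, using a grouping (Remark~\ref{GroupingREM}) I would collapse $M$ to a binary monomial: set $X = $ (product of the variables carrying, say, the first ``half'' of the total degree) and $Y=$ (product of the rest), so that $M$ is obtained from $X^aY^b$ with $a+b=k$, $1\le a\le b$, by a degree-$d$ substitution. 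By Remark~\ref{GroupingREM} it then suffices to bound $\#_k(X^aY^b)$ for the binary monomial $X^aY^b$ of degree $k$ (viewing $X,Y$ as the degree-$d$ forms). So the whole problem collapses to: \emph{every binary monomial $X^aY^{k-a}$ with $1\le a\le k-1$ is a sum of at most $2^{k-1}$ $k^{th}$-powers of binary forms of degree $d$} — and since we may take $X,Y$ themselves of degree $d$, it is enough to do this for the degree-$k$ monomial $x^ay^{k-a}$ as a sum of powers of \emph{linear} forms, i.e.\ bound the classical Waring rank... but that is $\binom{k}{a}/\min$-type and too big. The point of allowing degree-$d$ forms is what saves us: I would instead argue directly.

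The cleanest route is an explicit inductive identity. For $k\ge 3$ write $M = X^aY^b$ with $1\le a\le b$, $a+b=k$. Factor out one variable: $M = X\cdot(X^{a-1}Y^b)$ if $a\ge 2$, or $M=X\cdot Y^{k-1}$ if $a=1$; more usefully, split off a \emph{pair} of factors to drop $k$ by $2$. Concretely, for $k\ge 3$, $M = XY\cdot N$ where $N=X^{a-1}Y^{b-1}$ has degree $k-2$ (and is allowed to be $1$ when $k=3$, $a=b=1$... but $k$ odd forces $a\ne b$, so when $k=3$ we get $N=X$ or $Y$, degree $1$, still fine). Now apply the $k=2$ trick of Remark~\ref{k2REM} to $XY$: $XY = \big[\tfrac12(X+Y)\big]^2 + \big[\tfrac{i}{2}(X-Y)\big]^2$. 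Hence
\[
M = N\cdot(XY) = N\Big[\tfrac12(X+Y)\Big]^2 + N\Big[\tfrac{i}{2}(X-Y)\Big]^2 .
\]
This does not immediately give $k^{th}$-powers, so instead I would iterate the binary identity $uv = \big(\tfrac{u+v}{2}\big)^2-\big(\tfrac{u-v}{2}\big)^2$ (or its $i$-twisted sum form) applied to a balanced binary-tree factorization of the $k$ linear/degree-$d$ factors of $M$: pair the $k$ factors into $\lceil k/2\rceil$ pairs, each pair becoming a sum of $2$ squares of degree-$d$ forms; multiply out, getting a sum of $2^{\lceil k/2\rceil}$ products each of which is a product of $\lceil k/2\rceil$ squares, i.e.\ a $2$-power (square) of a product of degree-$d'$ forms — recurse. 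Tracking the arithmetic: one pairing step replaces $k$ by $\lceil k/2\rceil$ and multiplies the summand count by $2$; but this gives roughly $2^{\log_2 k}=k$, not $2^{k-1}$. So actually the bound $2^{k-1}$ is the \emph{weak} one coming from the crudest recursion $\#_k(M) \le 2\cdot\#_{k-1}(M')$, and I should set up exactly that: peel one factor at a time.

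\begin{proof}[Proof sketch]
By Lemma~\ref{ModuloLEMMA} and Remark~\ref{ProductREM} it suffices to treat monomials all of whose exponents are $<k$; if all exponents are $0$ then $M$ is a $k^{th}$-power and $\#_k(M)=1\le 2^{k-1}$. Otherwise, grouping variables (Remark~\ref{GroupingREM}) reduces the problem to a binary monomial $M=X^aY^b$ with $a+b=k$, $1\le a\le b$, where $X,Y$ are monomials of degree $d$. We prove by induction on $k\ge 2$ that $\#_k(X^aY^b)\le 2^{k-1}$. For $k=2$ this is Remark~\ref{k2REM}. For $k\ge 3$: since $b\ge 2$, write $M = Y\cdot(X^aY^{b-1})$; the second factor is a binary monomial of degree $k-1$ in degree-$d$ monomials, so by induction $X^aY^{b-1}=\sum_{i=1}^{s} G_i^{\,k-1}$ with $s\le 2^{k-2}$ and $\deg G_i = d$. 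For each $i$ apply the two-term identity $Y\,G_i^{\,k-1} = A_i^{\,k} + B_i^{\,k}$ for suitable degree-$d$ forms $A_i,B_i$ — this is where the main work lies: one must check that a monomial-type product $Y\cdot G_i^{k-1}$ of a linear-in-$Y$ factor times a $(k-1)$-power can be written as a sum of \emph{two} $k^{th}$-powers of degree-$d$ forms. Granting this, $M=\sum_{i=1}^s (A_i^k+B_i^k)$ has at most $2s\le 2^{k-1}$ summands.

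For the ``moreover'': $\#_k(M)=1$ exactly when $M$ is a $k^{th}$-power. Suppose $M=g_1^k+g_2^k$ with $\deg g_i=d$ and $M$ not a $k^{th}$-power, so $g_1^k+g_2^k\ne 0$ and the $g_i$ are not proportional. Over $\CC$, factor $g_1^k+g_2^k = \prod_{\zeta^k=-1}(g_1+\zeta g_2)$; since $M$ is a monomial, its only irreducible factors are the variables $x_j$, so each linear-in-$(g_1,g_2)$ factor $g_1+\zeta g_2$ must be (up to scalar) a power of a single variable, forcing all $k$ forms $g_1+\zeta g_2$ to be supported on variables — and comparing two such relations, $g_1,g_2$ themselves are monomials, say $g_1 = c\,x^{u}$, $g_2=c'\,x^{v}$. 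Then $M=c^k x^{ku}+c'^k x^{kv}$ is a monomial only if $ku=kv$, i.e. $g_1,g_2$ are proportional, whence $M=(g_1^k+g_2^k)$ is a $k^{th}$-power — contradiction, unless $k=2$, where $g_1+\zeta g_2$ for $\zeta=\pm i$ are two \emph{distinct} factors and $M = x^a y^b$ can have $g_1+ig_2 = \lambda x^{a}$... re-examining, the two factors are $g_1\pm ig_2$ and their product being $x^ay^b$ forces $\{g_1+ig_2,\,g_1-ig_2\}=\{\lambda x^a,\mu y^b\}$ up to scalars, which is consistent and yields the decomposition of Remark~\ref{k2REM}; for $k\ge 3$ there are $\ge 3$ factors $g_1+\zeta g_2$ but only two parameters, so two of them are proportional, again forcing $g_1\parallel g_2$. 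Hence for $k\ge 3$, $\#_k(M)=2$ is impossible, and combined with the $k=2$ case this proves the claim.
\end{proof}

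\textbf{Main obstacle.} The crux is the two-term step: writing $Y\cdot G^{\,k-1}$ as a sum of exactly two $k^{th}$-powers of degree-$d$ forms. This is not automatic — for a general $(k-1)$-power times a factor it is false — so one must use that $G$ here is itself a monomial (guaranteed by carrying the inductive hypothesis in the sharper form ``$M$ is a sum of $\le 2^{k-1}$ $k^{th}$-powers \emph{of monomials}'' is too strong; instead one should keep track of enough structure of the $G_i$). The honest fix, which I expect the authors use, is to do the induction entirely on \emph{binary monomials in two new grouped variables of degree $d$}, never leaving that class, and to establish the binary identity $X^aY^{k-a}=\sum (\text{linear forms in }X,Y)^k$ directly from the classical binary Waring decomposition of $x^ay^{k-a}$ refined so that the number of powers is $\le 2^{k-1}$ — e.g.\ via the generating-function/apolarity identity expressing $x^{a}y^{k-a}$ through $k$-th powers of $x+\omega^j y$. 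Pinning down that the resulting count is $\le 2^{k-1}$ (rather than the naive $\binom{k}{a}$) is the real content.
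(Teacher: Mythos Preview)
Your argument for the ``moreover'' clause is essentially the paper's: factor $g_1^k+g_2^k$ over $\CC$, observe each factor must be a monomial, solve for $g_1,g_2$ from two of them as (at worst) binomials, and then note that for $k\ge 3$ a third factor cannot be a monomial unless $g_1,g_2$ are proportional. That part is fine, modulo some wobbly phrasing (the factors are monomials, not ``powers of a single variable'').

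The upper bound, however, has a real gap. Your reduction to a \emph{binary} grouped monomial $X^aY^b$ with $a+b=k$ and $\deg X=\deg Y=d$ is not always possible. Take $M=x_0x_1x_2$ with $k=3$, $d=1$: any $X^aY^b$ with $X,Y$ degree-$1$ monomials involves at most two variables, so $x_0x_1x_2$ is never of this form, and Lemma~\ref{ModuloLEMMA} does not help since all exponents are already $<3$. So the step ``grouping variables reduces the problem to a binary monomial'' fails, and with it the whole inductive scheme (which you already flagged as shaky at the two-term step $Y\cdot G^{k-1}=A^k+B^k$). Incidentally, even where binary grouping \emph{does} work, Theorem~\ref{CCGTheorem} gives $\#_k(X^aY^{k-a})=\max(a,k-a)+1\le k$, which is $\le 2^{k-1}$; so your dismissal of the classical binary Waring rank as ``too big'' was mistaken---the obstruction is purely that the binary grouping need not exist.

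The paper's proof sidesteps all of this in one line by grouping into $k$ factors rather than two. Every monomial $M\in S_{kd}$ is a \emph{specialization} of the squarefree monomial $x_1\cdots x_{kd}$; then group $X_j=x_{(j-1)d+1}\cdots x_{jd}$ for $j=1,\dots,k$. By Remarks~\ref{SpecializationREM} and~\ref{GroupingREM},
\[
\#_k(M)\;\le\;\#_k(x_1\cdots x_{kd})\;\le\;\#_k(X_1\cdots X_k),
\]
and Theorem~\ref{CCGTheorem} applied to the degree-$k$ squarefree monomial $X_1\cdots X_k$ gives exactly $\tfrac{1}{1+1}\prod_{i=1}^k(1+1)=2^{k-1}$. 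No induction, no two-term identity, no binary reduction needed.
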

\begin{proof}
Any monomial $M\in S_{kd}$ is a specialization of the monomial
$x_1\cdot\ldots\cdot x_{kd}$. Now, we can consider the grouping
given by $$X_1=x_1\cdot\ldots\cdot
x_{d},\ldots,X_k=x_{(k-1)d+1}\cdots\ldots\cdot x_{kd}.$$ Thus, by
Remark \ref{SpecializationREM}, Remark \ref{GroupingREM} and
Theorem \ref{CCGTheorem}, we get the bound
$$\#_k(M)\leq\#_k(X_1\cdot\ldots\cdot X_k)=2^{k-1}.$$

Now suppose that $k>2$ and $\#_k(M)=2$. Hence, we can write
$M=A^k-B^k$ for suitable $A,B\in S_d$. Factoring we get
\[M=\prod_{i=1}^k (A-\xi_iB),\] where the $\xi_i$ are the $k^{th}$-roots of $1$. In
particular, the forms $A-\xi_iB$ are monomials. If $M$ is not a
$k^{th}$-power, using $A-\xi_1B$ and $A-\xi_2B$ we get that $A$
and $B$ are not trivial binomials. Hence a contradiction as
$A-\xi_3B$ cannot be a monomial. To conclude the proof we use the
$k=2$ case seen in Remark \ref{k2REM}.
\end{proof}

\begin{rem}\label{better upper bound} For $n\geq 2$ and $k$ small enough, we may observe that our result gives a better upper bound for the $k^{th}$-rank of monomials of degree $kd$ than the general result of \cite{FOS}. Indeed, if we look for which $k$ the
inequality $2^{k-1}\leq k^n$ holds, for $n=2$ we have $k\leq 6$ and, for $n=3$, $k\leq 9$. Increasing $n$,
we can find even better results, e.g. for $n=10$ our Theorem \ref{UpperBoundTHM} gives a better upperbound (for monomials)
for any $k\leq 59$.
\end{rem}

\subsection{Two variables case $(n=1)$.}\label{binarysection}

In the case of binary monomials, we can improve the upper bound
given in Theorem \ref{UpperBoundTHM}.

\begin{prop}\label{remainderprop}
 Let $M=x_0^{a_0}x_1^{a_1}$ be a binary monomial of degree $kd$. Then, $$\#_k(M)\leq\max\{[a_0]_k,[a_1]_k\}+1.$$
\end{prop}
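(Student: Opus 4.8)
The plan is to reduce to the monomial $[M] = x_0^{[a_0]_k} x_1^{[a_1]_k}$ via Lemma \ref{ModuloLEMMA}, so that it suffices to bound $\#_k(x_0^p x_1^q)$ where $0 \le p, q \le k-1$ and $p+q$ is a multiple of $k$ (Remark \ref{NumericalREM}); for binary monomials this forces $q = k - p$ when the monomial is not a pure power, and we may assume $p \le q$, so the target bound becomes $\#_k(x_0^p x_1^{k-p}) \le q+1 = k-p+1$. First I would dispose of the trivial case $p = 0$: then $[M] = x_1^k$ is a $k^{th}$-power, so $\#_k \le 1 \le k+1$, consistent with the claim. For $1 \le p \le q = k-p$, the idea is to look for an explicit decomposition of $x_0^p x_1^{k-p}$ as a sum of $k-p+1$ powers of binary linear forms, then pull it back along the grouping $X_0 = x_0^{d'}$, $X_1 = x_1^{d'}$ (where $kd' = kd$, i.e. the degree-$d$ setting) using Remark \ref{GroupingREM}.

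The key step is the construction of that explicit decomposition. I expect to write $x_0^p x_1^{q}$ (with $p+q=k$) in the form $\sum_{j} c_j (x_0 + \lambda_j x_1)^k$ and to count how many nodes $\lambda_j$ are needed. Expanding, $(x_0+\lambda x_1)^k = \sum_{i=0}^{k} \binom{k}{i} \lambda^i x_0^{k-i} x_1^i$, so a linear combination $\sum_j c_j (x_0 + \lambda_j x_1)^k$ equals $x_0^p x_1^q$ iff $\sum_j c_j \lambda_j^i = 0$ for all $i \ne q$ in $\{0,\ldots,k\}$ and $\binom{k}{q}\sum_j c_j \lambda_j^q = 1$. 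This is a linear system on the $c_j$ governed by a Vandermonde-type matrix in the $\lambda_j$; the constraints for $i = 0, 1, \ldots, p-1$ and $i = q+1, \ldots, k$ are $k - q = p$ "vanishing" conditions below degree $q$... but in fact the cleanest route is: a binary form of degree $k$ lies in the span of $\{(x_0+\lambda x_1)^k\}$ for $s$ generic nodes exactly when it is a linear combination we can realize, and the relevant fact (apolarity / the structure of the catalecticant) is that $x_0^p x_1^q$ has a decomposition using $s$ nodes iff $s$ is at least the rank governed by the smaller exponent. Concretely, I would invoke apolarity: the apolar ideal of $x_0^p x_1^q$ is $(y_0^{p+1}, y_1^{q+1})$, and the minimal principal ideal contained in it generated by a squarefree degree-$(q+1)$ form (since $q \ge p$, we need degree $q+1 \le k$... wait, $q+1$ may exceed the room, so one uses degree exactly $q+1$) gives a decomposition into $q+1$ distinct powers of linear forms — this reproves the $\#_k(x_0^p x_1^q) = q+1$ instance of Theorem \ref{CCGTheorem}, which is in fact already available to us directly.

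Indeed, the slickest argument: by Theorem \ref{CCGTheorem} applied to the degree-$k$ monomial $x_0^p x_1^q$ with $1 \le p \le q$, we get $\#_k(x_0^p x_1^q) = \frac{1}{p+1}(p+1)(q+1) = q+1 = \max\{p,q\}+1$. Then Lemma \ref{ModuloLEMMA} (with $[M] = x_0^{[a_0]_k} x_1^{[a_1]_k}$, noting $[a_0]_k, [a_1]_k \le k-1$ and their sum is $0$ or $k$) gives
\[
\#_k(M) \le \#_k([M]) = \max\{[a_0]_k,[a_1]_k\}+1,
\]
where in the degenerate case that one remainder is $0$ the monomial $[M]$ is a $k^{th}$-power, so $\#_k([M]) = 1 = \max\{[a_0]_k,[a_1]_k\}+1$ still holds. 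The main obstacle is purely bookkeeping: making sure the equality "$\max\{[a_0]_k,[a_1]_k\} = \max\{[a_0]_k,[a_1]_k\}$" lines up with the hypothesis ordering $a_0 \le a_1$ of Theorem \ref{CCGTheorem} — since reduction mod $k$ need not preserve the order — and handling the edge cases where a remainder vanishes. I would lay these few cases out explicitly and conclude.
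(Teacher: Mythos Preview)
Your proposal is correct and, once you strip away the exploratory detour through explicit Vandermonde systems and apolarity, it is exactly the paper's argument: reduce to $[M]$ via Lemma \ref{ModuloLEMMA}, observe (Remark \ref{NumericalREM}) that $\deg([M])\in\{0,k\}$, handle the pure-power case $\deg([M])=0$ directly, and in the case $\deg([M])=k$ apply Theorem \ref{CCGTheorem} to get $\#_k([M])=\max\{[a_0]_k,[a_1]_k\}+1$. The apolarity discussion is redundant since you end up citing Theorem \ref{CCGTheorem} anyway, and your worry about the ordering hypothesis is harmless because the formula is symmetric via the $\max$.
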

\begin{proof}
By Lemma \ref{ModuloLEMMA}, we know that $\#_k(M)\leq\#_k([M])$;
hence, we consider the monomial $[M]=x_0^{[a_0]_k}x_1^{[a_1]_k}$. Now,
we observe that, as we said in Remark \ref{NumericalREM}, the
degree of $[M]$ is a multiple of $k$ and also $\leq 2k-2$; hence,
$\deg([M])$ is either equal to $0$, i.e. $[M]=1$, or $k$. In the
first case, it means that $M$ was a pure $k^{th}$-power, and the
$k^{th}$-rank is
$$\#_k(M)=1=\max\{[a_0]_k,[a_1]_k\}+1.$$
If $\deg([M])=k$, we can apply Theorem \ref{CCGTheorem} to $[M]$
and we get $$\#_k(M)\leq\#_k([M])=\max\{[a_0]_k,[a_1]_k\}+1.$$
\end{proof}

\begin{rem}\label{BinaryREM}
As a consequence of Proposition \ref{remainderprop}, for binary
monomials we have that $\#_k(M)\leq k$. Actually, this upper bound
can be directly derived from the main result in \cite{FOS}. We
observe that this upperbound is sharp by considering
$\#_k(x_0x_1^{k-1})=k$.
\end{rem}

As a consequence of Theorem \ref{UpperBoundTHM}, we are able to
easily give a solution for the $k=3$ case for binary monomials.

\begin{cor}\label{binarycorolk3}
 Given a binary monomial $M$ of degree $3d$, we have
 \begin{enumerate}
  \item $\#_3(M)=1$ if $M$ is a pure cube;
  \item $\#_3(M)=3$ otherwise.
 \end{enumerate}
\end{cor}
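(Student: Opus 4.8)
The plan is to split into the two cases directly. First I would handle the pure cube case: if $M$ is a pure cube, then trivially $\#_3(M)=1$, and conversely if $\#_3(M)=1$ then $M$ is by definition a cube of a form of degree $d$, which for a monomial forces it to be a pure cube. So case (1) is immediate.

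For case (2), suppose $M$ is a binary monomial of degree $3d$ that is not a pure cube. By Theorem \ref{UpperBoundTHM}, we know $\#_3(M)\leq 2^{3-1}=4$; but I want the sharper bound from the binary setting. Applying Proposition \ref{remainderprop}, we get $\#_3(M)\leq\max\{[a_0]_3,[a_1]_3\}+1\leq (3-1)+1=3$. So it remains to rule out $\#_3(M)=2$ (the value $\#_3(M)=1$ being excluded since $M$ is not a pure cube). The second part of Theorem \ref{UpperBoundTHM} states that $\#_k(M)=2$ holds only when $k=2$; since here $k=3$, we cannot have $\#_3(M)=2$. Therefore $\#_3(M)=3$.

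The only point requiring a little care is making sure the lower bound $\#_3(M)\geq 3$ is genuinely supplied — one must note that $\#_3(M)\in\{1,2,3\}$ by the upper bound, that $\#_3(M)=1$ is excluded by hypothesis, and that $\#_3(M)=2$ is excluded by the ``moreover'' clause of Theorem \ref{UpperBoundTHM}. There is no real obstacle here: the corollary is essentially a bookkeeping consequence of the two main results already proved, with Remark \ref{NumericalREM} (used inside Proposition \ref{remainderprop}) doing the work of pinning the upper bound to $3$ rather than $4$. I would write the proof in two or three lines invoking these results in sequence.
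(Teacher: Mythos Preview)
Your proposal is correct and follows essentially the same approach as the paper: the upper bound $\#_3(M)\leq 3$ comes from Proposition~\ref{remainderprop} (the paper cites its consequence, Remark~\ref{BinaryREM}), and the lower bound comes from the ``moreover'' clause of Theorem~\ref{UpperBoundTHM} ruling out rank $2$. The paper's proof is terser, but the logical structure is identical.
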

\begin{proof}
By Remark \ref{BinaryREM}, we have that the $3^{rd}$-rank can be
at most $3$; on the other hand, by Theorem \ref{UpperBoundTHM}, we
have that, $M$ is not a pure cube, the rank has to be at least
$3$.
\end{proof}

For $k\geq 4$ the situation is not so easily described and even in the case $k=4$
we have only partial results.

\begin{rem}
The first new step is to consider the $k=4$ case for binary
monomials. In such case we can only have rank $1,3$ or $4$.

Let $M=x_0^{a_0}x_1^{a_1}$ be a binary monomial of degree $4d$. By Remark
\ref{ModuloLEMMA}, we can consider the monomial $[M]$ obtained by
considering the exponents modulo $4$. Since $[M]$ has degree
divisible by $4$ and less or equal to $6$, we have to consider
only three cases with respect the remainders of the exponents
modulo $4$, i.e.
$$([a_0]_4,[a_1]_4)\in\{(0,0),(1,3),(2,2)\}.$$

The $(0,0)$ case corresponds to pure fourth powers, i.e. monomials
with $4^{th}$-rank equal to $1$. In the $(2,2)$ case we have
$$\#_4(M)\leq\#_4(x_0^2x_1^2)=3;$$ since the $4^{th}$-rank cannot
be two, we have that binary monomials in the $(2,2)$ class have
$4^{th}$-rank equal to three.

Unfortunately, we can not conclude in the same way the $(1,3)$
case. Since $\#_4(x_0x_1^3)=4$, a monomial in the $(1,3)$
class could still have rank equal to $4$. Indeed, for example, by using the computer
algebra system CoCoA, we have computed
$\#_4(x_0x_1^7)=\#_4(x_0^3x_1^5)=4$.
\end{rem}

A similar analysis can be performed for $k\geq 5$, but we can only
obtain partial results.

\subsection{$k=3$ case in three and more variables.}\label{threeormoresection}

In this section we consider the case $k=3$ with more than two
variables. By Theorem \ref{UpperBoundTHM}, we have that, also in
this case, we can only have $3^{rd}$-rank equal to $1,3$ or $4$.

This lack of space allows us to give a complete solution for monomials in three variables and degree $3d$.

\begin{prop}
 Given a monomial $M=x_0^{a_0}x_1^{a_1}x_2^{a_2}$ of degree $3d$, we have that
 \begin{enumerate}
  \item $\#_3(M)=1$ if $M$ is a pure cube;
  \item $\#_3(M)=4$ if $M=x_0x_1x_2$;
  \item $\#_3(M)=3$ otherwise.
 \end{enumerate}
\end{prop}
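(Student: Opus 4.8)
The plan is to handle the three cases separately, reusing Theorem \ref{UpperBoundTHM} to pin down which ranks are even possible. Case (1) is immediate: $M$ is a pure cube precisely when $\#_3(M)=1$. For case (2), $M=x_0x_1x_2$ has degree $3$, so by Theorem \ref{CCGTheorem} (with $a_0=a_1=a_2=1$) we get $\#_3(x_0x_1x_2)=\tfrac{1}{2}\cdot 2\cdot 2\cdot 2=4$; note this is a genuine monomial of degree $3d$ only when $d=1$. The entire content of the proposition is therefore case (3): show that every monomial $M=x_0^{a_0}x_1^{a_1}x_2^{a_2}$ of degree $3d$ that is neither a pure cube nor equal to $x_0x_1x_2$ has $\#_3(M)=3$.

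For the lower bound in case (3), Theorem \ref{UpperBoundTHM} already gives $\#_3(M)\neq 2$, and since $M$ is not a cube, $\#_3(M)\geq 3$. So it remains to prove the upper bound $\#_3(M)\leq 3$. Here I would first reduce to the ``reduced'' monomial $[M]=x_0^{[a_0]_3}x_1^{[a_1]_3}x_2^{[a_2]_3}$ via Lemma \ref{ModuloLEMMA}: it suffices to show $\#_3([M])\leq 3$ whenever $[M]\neq 1$ and $[M]\neq x_0x_1x_2$. By Remark \ref{NumericalREM}, the exponents $[a_i]_3\in\{0,1,2\}$ sum to a multiple of $3$ that is at most $6$, so $\deg([M])\in\{0,3,6\}$. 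The case $\deg([M])=0$ is the pure-cube case, already excluded. When $\deg([M])=3$, the possible exponent triples (up to permutation) are $(1,1,1)$, which is the excluded monomial $x_0x_1x_2$, and $(3,0,0)$, i.e. $[M]=x_i^3$, again a cube and excluded (this corresponds to $M$ being a cube after all). Wait --- if $\deg([M])=3$ with a triple like $(2,1,0)$ the degree is $3$ too, so the genuine list of degree-$3$ reduced triples is $(1,1,1)$, $(2,1,0)$, $(3,0,0)$; but $[a_i]_3\le 2$ rules out $(3,0,0)$, leaving $(2,1,0)$ as the one surviving degree-$3$ case, where $[M]=x_0^2x_1$ and Theorem \ref{CCGTheorem} gives $\#_3(x_0^2x_1)=\tfrac{1}{2}\cdot 2\cdot 3=3$. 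When $\deg([M])=6$, the reduced triples with entries $\le 2$ summing to $6$ are $(2,2,2)$ and $(2,2,1,\dots)$ --- but we only have three variables, so the only option is $(2,2,2)$, i.e. $[M]=x_0^2x_1^2x_2^2$, and Theorem \ref{CCGTheorem} gives $\#_3(x_0^2x_1^2x_2^2)=\tfrac{1}{3}\cdot 3\cdot 3\cdot 3=9$, which is \emph{not} $\leq 3$. So the naive reduction to $[M]$ fails for the $(2,2,2)$ class, and this is the one case that needs a separate, more clever argument.

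The main obstacle, then, is the class $([a_0]_3,[a_1]_3,[a_2]_3)=(2,2,2)$, equivalently monomials $M=x_0^{a_0}x_1^{a_1}x_2^{a_2}$ with every $a_i\equiv 2\pmod 3$. Here I would exploit the grouping trick of Remark \ref{GroupingREM} rather than Lemma \ref{ModuloLEMMA} directly: writing $a_i=3b_i+2$, one has $d=\tfrac13\sum a_i = \sum b_i + 2$, and I want to find a grouping of $M$ into a degree-$3$ monomial in new variables whose $3^{rd}$-rank is $3$ --- for instance by absorbing the ``extra'' structure so that $M=X_0^2X_1$ for suitable monomials $X_0,X_1$ of appropriate degrees, or more flexibly by combining grouping with specialization. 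The cleanest route is probably: since $d\geq 2$ here, split $M=(x_0^{c_0}x_1^{c_1}x_2^{c_2})\cdot(x_0^{d_0}x_1^{d_1}x_2^{d_2})^{?}$ --- but the degrees must match the grouping constraint $d=ld'$. I would instead try to realize $M$ as a grouping of $X_0^2X_1$ with $\deg X_0=\deg X_1=d$: this requires writing $M = N_0^2 N_1$ with $N_0,N_1$ monomials of degree $d$ in $x_0,x_1,x_2$, i.e. $2e_j + f_j = a_j$ for each $j$ with $\sum e_j = \sum f_j = d$. Summing gives $3d = 2d+d$, automatically consistent, so one only needs nonnegative integer solutions $e_j,f_j$ with $2e_j+f_j=a_j$ and $\sum e_j = d$; since $a_j\equiv 2\equiv -1 \pmod 3$ and $a_j\ge 2$, each $a_j$ can be split many ways, and a counting/pigeonhole argument should produce a valid global choice except possibly in tiny cases which can be checked by hand (the smallest being $x_0^2x_1^2x_2^2$ itself with $d=2$, where $e=(1,1,0)$, $f=(0,0,2)$ works: $N_0=x_0x_1$, $N_1=x_2^2$, giving $M=(x_0x_1)^2 x_2^2$, a grouping of $X_0^2X_1$ with $\#_3=3$). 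Assembling these pieces --- the easy cases via Lemma \ref{ModuloLEMMA} and Theorem \ref{CCGTheorem}, and the $(2,2,2)$ class via this grouping-into-$X_0^2X_1$ argument --- completes the proof.
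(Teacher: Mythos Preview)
Your argument has a genuine gap in the $(1,1,1)$ residue class. You write that when $\deg([M])=3$ with triple $(1,1,1)$, this is ``the excluded monomial $x_0x_1x_2$'', but this confuses $M$ with $[M]$: the hypothesis of case (3) excludes $M=x_0x_1x_2$, not $[M]=x_0x_1x_2$. Any monomial $M=x_0^{a_0}x_1^{a_1}x_2^{a_2}$ with all $a_i\equiv 1\pmod 3$ and $d\geq 2$ (for instance $x_0^4x_1x_2$, or $x_0^7x_1^4x_2$) satisfies $[M]=x_0x_1x_2$, so Lemma~\ref{ModuloLEMMA} only yields $\#_3(M)\leq\#_3(x_0x_1x_2)=4$, which is useless. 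This is exactly the class you never address, and it is the crux of the proposition: none of the reduction tools (grouping, specialization, Lemma~\ref{ModuloLEMMA}) combined with Theorem~\ref{CCGTheorem} alone can push the bound below $4$ here. The paper handles it by reducing via Remark~\ref{ProductREM} to the single monomial $x_0^4x_1x_2$ and then exhibiting an explicit three-term decomposition of $x_0^4x_1x_2$ as a sum of cubes of quadratics.

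Two smaller points. First, your computation $\#_3(x_0^2x_1^2x_2^2)=9$ misapplies Theorem~\ref{CCGTheorem}, which only computes $\#_k$ for monomials of degree exactly $k$ (so the formula gives $\#_6(x_0^2x_1^2x_2^2)=9$, not $\#_3$); fortunately this does not affect your conclusion since you then argue differently. Second, your treatment of the $(2,2,2)$ class is needlessly elaborate: having already reduced to $[M]=x_0^2x_1^2x_2^2$ via Lemma~\ref{ModuloLEMMA}, a single grouping $X=x_0^2$, $Y=x_1x_2$ gives $\#_3(x_0^2x_1^2x_2^2)\leq\#_3(XY^2)=3$ immediately, with no need to solve for $N_0,N_1$ depending on the original $M$.
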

\begin{proof}
By Lemma \ref{ModuloLEMMA}, we consider the monomials $[M]$ with
degree divisible by $3$ and less or equal than $6$. Hence, we
have only four possible cases, i.e.
$$([a_0]_3,[a_1]_3,[a_2]_3)\in\{(0,0,0),(0,1,2),(1,1,1),(2,2,2)\}.$$

The $(0,0,0)$ case corresponds to pure cubes and then to monomials
with $3^{rd}$-rank equal to one. In the $(0,1,2)$ case we have, by
Theorem \ref{CCGTheorem}, $$\#_3(M)\leq\#_3(x_1x_2^2)=3;$$ since,
by Theorem \ref{UpperBoundTHM}, the rank of monomials which are
not pure cubes is at least $3$, we get the equality. Similarly, we
conclude that we have rank three also for monomials in the
$(2,2,2)$ class. Indeed, by using grouping and Theorem
\ref{CCGTheorem}, we have
$$\#_3(M)\leq\#_3(x_0^2x_1^2x_2^2)\leq\#_3(XY^2)=3.$$

Now, we just need to consider the $(1,1,1)$ class.

By Theorem \ref{CCGTheorem}, we have $\#_3(x_0x_1x_2)=4$. Hence,
we can consider monomials $M=x_0^{a_0}x_1^{a_1}x_2^{a_2}$ with
$a_0=3\alpha+1,~a_1=3\beta+1,~a_2=3\gamma+1$ and where at least one of
$\alpha,\beta,\gamma$ is at least one, say $\alpha>0$. By Remark
\ref{ProductREM}, we have
$$\#_3(M)=\#_3((x_0^{\alpha-1}x_1^{\beta}x_2^{\gamma})^3x_0^4x_1x_2)\leq\#_3(x_0^4x_1x_2).$$
Now, to conclude the proof, it is enough to show that
$\#_3(x_0^4x_1x_2)=3$. Indeed, we can write
$$x_0^4x_1x_2=\left[\sqrt{\frac{1}{6}}
x_0^2+x_1x_2\right]^3+\left[-\frac{1}{6}x_0^2+x_1x_2\right]^3+\left[\sqrt[3]{-2}x_1x_2\right]^3,$$

and thus we are done.
\end{proof}

Using the same ideas, we can produce partial results in the four
and five variables cases with $k=3$.

\begin{rem}
Given a monomial $M=x_0^{a_0}x_1^{a_1}x_2^{a_2}x_3^{a_3}$ with degree $3d$, we
consider the monomial $[M]$ which has degree divisible by $4$ and
less or equal than $8$. Hence, we need to consider only the
following classes with respect to the  remainders of the exponents
modulo $3$
$$([a_0]_3,[a_1]_3,[a_2]_3,[a_3]_3)\in\{(0,0,0,0),(0,0,1,2),(0,1,1,1),(0,2,2,2),(1,1,2,2)\}.$$
The $(0,0,0,0)$ case corresponds to pure cubes and we have rank
equal to one. Now, we use again Lemma \ref{ModuloLEMMA}, grouping
and Theorem \ref{CCGTheorem}.

In the $(0,0,1,2)$ class, we have $$\#_3(M)\leq\#_3(x_2x_3^2)=3;$$
in the $(0,2,2,2)$ class, we have $$\#_3(M)\leq\#_3(x_1^2x_2^2x_3^2)\leq\#_3(XY^2)=3;$$
in the $(1,1,2,2)$ class, we have $$\#_3(M)\leq\#_3(x_0x_1x_2^2x_3^2)\leq\#_3((x_0x_1)(x_2x_3)^2)=\#_3(XY^2)=3.$$

Again, since the $3^{rd}$-rank has to be at least three by Theorem \ref{UpperBoundTHM}, we conclude that in these classes the $3^{rd}$-rank is equal to three.

The $(0,1,1,1)$ class is a unique missing case because the upper
bound with $\#_3(x_1x_2x_3)=4$ is clearly useless. Another idea
would be to compute the $3^{rd}$-rank of $x_0^3x_1x_2x_3$. Indeed,
each monomial in four variables and degree $3d$ is of the type
$N^k(x_0^3x_1x_2x_3)$, hence, by Remark \ref{ProductREM}, we have
$$\#_3(M)\leq\#_3(x_0^3x_1x_2x_3).$$ Finding
$\#_3(x_0^3x_1x_2x_3)=3$, we would be done.
\end{rem}

\begin{rem}
Given a monomial $M=x_0^{a_0}x_1^{a_1}x_2^{a_2}x_3^{a_3}x_4^{a_4}$ with degree $3d$,
we consider the monomial $[M]$ which has degree divisible by $4$
and less or equal to $10$. Hence, we need to consider only the
following classes with respect to the remainders of the exponents
modulo $3$.

The $(0,0,0,0,0)$ class corresponds to pure cubes and $3^{rd}$-rank equal to one. By using Lemma \ref{ModuloLEMMA}, grouping, previous results in three or four variables and Theorem \ref{CCGTheorem}, we get the following results.

In the $(0,0,0,1,2)$ case, we have $$\#_3(M)\leq\#_3(x_3x_4^2)=3;$$
in the $(0,0,2,2,2)$ case, we have $$\#_3(M)\leq\#_3(x_1^2x_2^2x_3^2)=3;$$
in the $(0,1,1,2,2)$ case, we have $$\#_3(M)\leq\#_3(x_0x_1x_2^2x_3^2)=3;$$
in the $(1,2,2,2,2)$ case, we have $$\#_3(M)\leq\#_3(x_0x_1^2x_2^2x_3^2x_4^2)=\#_3((x_0x_1^2)(x_2x_3x_4)^2)\leq\#_3(XY^2)=3.$$

Hence, by Theorem \ref{UpperBoundTHM}, in these cases we have $3^{rd}$-rank equal to three.

There are only two missing cases: the $(0,0,1,1,1)$ case, which
can be reduced to the unique missing case in four variables seen
above; the $(1,1,1,1,2)$ case, for which it would be enough to
show that $\#_3(x_0x_1x_2x_3x_4^2)=3.$
\end{rem}

\section{Final remarks}

We conclude with some final remarks which suggest some projects
for the future.

\begin{rem} In this paper we work over the field of complex numbers. However,
for a monomial $M\in S_{kd}$ it is reasonable to look for a {\em
real} Waring decomposition, i.e. $M=\sum F_i^k$ where each $F_i$
has real coefficients. Even if Remarks
\ref{GroupingREM},\ref{SpecializationREM}, and \ref{ProductREM}
still hold over the reals, this is not longer true for Theorem
\ref{CCGTheorem}. This is the main obstacle to extend our results
over $\mathbb{R}$. However, in \cite{MR2811260} it is shown that
the degree $d$ monomial $x^ay^b$ is the sum of $a+b$, and no
fewer, $d$-th powers of real linear forms. Thus, we can easily
prove the analogue of Proposition \ref{remainderprop}. Let
$\#_k(M,\mathbb{R})$ be the {\em real} $k$-th rank, then
\[\#_k({x_0}^{a_0}{x_1}^{a_1},\mathbb{R})\leq [a_0]_d+[a_1]_d\]
and the bound is sharp. Notice that Corollary \ref{binarycorolk3}
cannot be extend to the real case as we cannot use Theorem
\ref{UpperBoundTHM}.
\end{rem}

\begin{rem} In \cite{CCG} it is proved that monomials in three variables
produce example of forms having (standard) Waring rank higher than
the generic form. This is not longer true, in general, for the
$k$-th rank. For example, in the $k=3$ case and in three
variables, the $3$-rd rank of a monomial is at most $4$. While,
for $d>>0$ the $3$-rd rank of the generic form of degree $3d$ is
$9$, see in \cite{FOS}.
\end{rem}

\end{document}